\numberwithin{equation}{section}
\newtheorem{theorem}{Theorem}[section]
\newtheorem{lemma}{Lemma}[section]
\newtheorem{definition}{Definition}[section]
\newcommand{\sign}[1]{\mathrm{sgn}(#1)}
\begin{document}
\UseRawInputEncoding 
\begin{frontmatter}



\title{Global conservative weak solutions for a class of nonlinear
dispersive wave equations beyond wave breaking
}

\author[ad1,ad2]{Yonghui Zhou}
\ead{zhouyh318@nenu.edu.cn}
\author[ad1]{Shuguan Ji\corref{cor}}
\ead{jisg100@nenu.edu.cn}
\address[ad1]{School of Mathematics and Statistics and Center for Mathematics and Interdisciplinary Sciences, Northeast Normal University, Changchun 130024, P.R. China}
\address[ad2]{School of Mathematics and Statistics, Hexi University, Zhangye 734000, P.R. China}
\cortext[cor]{Corresponding author.}

\begin{abstract}

In this paper, we study the global conservative weak solutions for a class of nonlinear dispersive wave equations after wave breaking. We first transform the equations into an equivalent semi-linear system by introducing new variables. We then establish the global existence of solutions for the semi-linear system by using the standard theory of ordinary differential equations. Finally, returning to the original variables, we obtain the global conservative weak solutions for the original equations.

\end{abstract}

\begin{keyword}
Nonlinear dispersive wave equations; global conservative solutions; wave breaking.
\end{keyword}

\end{frontmatter}


\section{Introduction}
\label{sec:1}

In this paper, we consider the Cauchy problem for a class of nonlinear
dispersive wave equations with the following form
\begin{equation}
u_{t}-u_{txx}+(f(u))_{x}-(f(u))_{xxx}+\left(g(u)+\frac{f''(u)}{2}u_{x}^{2}\right)_{x}=0
\label{101}
\end{equation}
subject to the initial datum
\begin{equation}
u(0,x)=u_{0}(x),
\label{102}
\end{equation}
which was proposed by Holden and Raynaud \cite{Holden2007}, where $f(u), g(u)\in C^{\infty}(\mathbb{R},\mathbb{R})$ satisfy $g(0)=0$. Subsequently, Tian, Yan and Zhang \cite{Tian2014} investigated the local well-posedness of solutions for the Cauchy problem \eqref{101}--\eqref{102}. Novruzov \cite{Novruzov2017} established a local-in-space blowup criterion of solutions for the Cauchy problem \eqref{101}--\eqref{102}.

For $f(u)=\frac{u^{2}}{2}$ and $g(u)=u^{2}$, \eqref{101} corresponds to the classical Camassa-Holm equation
\begin{equation}
u_{t}-u_{txx}+3uu_{x}=2u_{x}u_{xx}+uu_{xxx},
\label{103}
\end{equation}
which is a well-known mathematical model describing the unidirectional propagation of shallow water waves, where $u(t,x)$ represents the fluid's free surface above a flat bottom. Such model was derived physically by Camassa and Holm \cite{Camassa1993} in 1993. Since then, its mathematical properties have been investigated extensively, such as
bi-Hamiltonian structure \cite{Fokas1981}, complete integrability \cite{Camassa1993,Constantin2001}, local well-posedness  \cite{Constantin19971,Constantin1,Constantin19981,Li2000}, wave breaking phenomena \cite{Constantin19971,Constantin1,Constantin1998,Constantin19981,Constantin19982,Constantin19983,
Constantin2000,Constantin20002,Li2000}, global existence of strong solutions \cite{Constantin19971,Constantin19981,Constantin19983} and global existence of weak solutions \cite{Constantin19984,Constantin20004,Holden20071,Holden2008,Wahlen2006,Xin2000}.

For $f(u)=\frac{k}{2}u^{2}$ and $g(u)=\frac{3-k}{2}u^{2}$, \eqref{101} corresponds to the hyper-elastic rod wave equation
\begin{equation}
u_{t}-u_{txx}+3uu_{x}=k\left(2u_{x}u_{xx}+uu_{xxx}\right),
\label{104}
\end{equation}
which was introduced by Dai \cite{Dai1,Dai2}, and describes far-field, finite length, finite amplitude radial deformation waves in cylindrical compressible hyper-elastic rods and $u$ represents the radial stretch relative to a pre-stressed state. Also notice that when parameter $k=1$, then \eqref{104} is reduced to \eqref{103}.
The local well-posedness, wave breaking phenomena and global existence of strong solutions of the Cauchy problem for \eqref{104} has been investigated in \cite{Brandolese20141,Brandolese20142,Yin2004,Zhou2005}.

For $f(u)=\frac{k}{2}u^{2}$, \eqref{101} corresponds to the generalized hyper-elastic rod wave equation
\begin{equation}
u_{t}-u_{txx}+\left(g(u)\right)_{x}+k uu_{x}=k\left(2u_{x}u_{xx}+uu_{xxx}\right),
\label{105}
\end{equation}
which was firstly studied by Coclite, Holden and Karlsen \cite{Coclite20051} in 2005.
Thereafter, many authors investigated the mathematical properties of problem \eqref{105}, such as well-posedness \cite{Mustafa20071}, wave breaking phenomena \cite{Mustafa20071} and global weak solutions \cite{Coclite20052}.

In the last 30 years, the Camassa-Holm equation and its various generalizations
were studied due to its many very interesting and remarkable properties, see \cite{Brandolese2014,Brandolese20141,Chen2011,Escher2007,Fu2010,Gui2010,Ji2021,Ji2022,Novruzov2022,Zhou2022}. However, their works are mainly to consider the behavior of solutions before and during the occurrence of wave breaking phenomena. In view of the possible development of singularities in finite time, it is natural to wonder about the behavior of solutions after the occurrence of wave breaking phenomena.
In 2007, by the characteristic method, Bressan and Constantin firstly proved that solutions of the Camassa-Holm equation \eqref{103} can be continued as either global conservative weak solution \cite{Bressan2007} or global dissipative weak solutions \cite{Bressan20071}. Afterwards, Mustafa \cite{Mustafa20072} obtained the global conservative weak solutions for the hyper-elastic rod wave equation \eqref{104} by using the same method as \cite{Bressan2007}. In 2015, Bressan, Chen and Zhang \cite{Bressan2015} investigated the uniqueness of the conservative solution for the Camassa-Holm equation \eqref{103}. Chen, Chen and Liu \cite{Chen2018} investigated the existence and uniqueness of global conservative weak solutions for the Novikov equation. Tu, Liu and Mu \cite{Tu2019} obtained the existence and uniqueness of the global conservative weak solutions for the rotation-Camassa-Holm equation.

Inspired by the previous work, in this paper, we study the global conservative weak solutions of the Cauchy problem \eqref{101}--\eqref{102}.

The rest of this paper is organized as follows. In Section \ref{sec:2}, we give the energy conservation law and some basic estimates. In Section \ref{sec:3}, we introduce a new set of independent and dependent variables, and transform the equation \eqref{101} into an equivalent semi-linear system under new variables. In Section \ref{sec:4}, we establish the global existence of solutions for the semi-linear system. In Section \ref{sec:5}, by inverse transformation method, we prove the existence of the global conservative weak solution for equation \eqref{101}.

\section{Preliminary}

\setcounter{equation}{0}

\label{sec:2}

In this section, we consider the following nonlocal form of a class of nonlinear
dispersive wave equations
\begin{equation}
u_{t}+f'(u)u_{x}+P_{x}=0
\label{201}
\end{equation}
equivalent to \eqref{101}, where the source term $P$ is defined by
\begin{equation}
P=p\ast\left(g(u)+\frac{f''(u)}{2}u_{x}^{2}\right)
=\frac{1}{2}e^{-|x|}\ast\left(g(u)+\frac{f''(u)}{2}u_{x}^{2}\right).
\label{202}
\end{equation}

For smooth solutions, differentiating \eqref{201} with respect to $x$ and using the relation $p_{xx}\ast h=p\ast h-h$, we get
\begin{equation}
u_{tx}+\frac{f''(u)}{2}u_{x}^{2}+f'(u)u_{xx}+P-g(u)=0.
\label{203}
\end{equation}
Multiplying \eqref{201} by $2u$ and \eqref{203} by $2u_{x}$, and adding the two resulting equations, we can get the following equation
\begin{equation}
(u^{2}+u_{x}^{2})_{t}+\left(f'(u)(u^{2}+u_{x}^{2})\right)_{x}
=-2(uP)_{x}+\left(2g(u)+f''(u)u^{2}\right)u_{x}.
\label{204}
\end{equation}
Define
\begin{equation}
H(u)=\int_{0}^{u}\left(2g(s)+f''(s)s^{2}\right)ds.
\label{205}
\end{equation}
Then \eqref{204} can be rewritten as
\begin{equation}
(u^{2}+u_{x}^{2})_{t}+\left(f'(u)(u^{2}+u_{x}^{2})\right)_{x}
=\left(H(u)-2uP\right)_{x}.
\label{206}
\end{equation}
Integrating \eqref{206} with respect to $t$ and $x$ over $[0,t]\times \mathbb{R}$, we get
\begin{equation}
E(t)=\int_{\mathbb{R}}(u^{2}(t,x)+u_{x}^{2}(t,x))dx=E(0).
\label{207}
\end{equation}

Since $f(u), g(u)\in C^{\infty}(\mathbb{R},\mathbb{R})$ and $g(0)=0$, then we have
\begin{equation}
|g(u(x))|\leq \sup_{|s|\leq\|u\|_{L^{\infty}}}|g'(s)||u(x)|\leq C(\|u\|_{1})|u(x)|.
\label{208}
\end{equation}
On the other hand, we can also easily verify that $|f''(u)|<C_{1}$ for some positive constant $C_{1}$. Therefore, we get
\begin{align}
\|P\|_{L^{2}}
\leq&\frac{1}{2}\left\|e^{-|x|}\right\|_{L^{1}}\|g(u)\|_{L^{2}}+ \frac{1}{2}\left\|e^{-|x|}\right\|_{L^{2}}\left\|\frac{f''(u)}{2}u_{x}^{2}\right\|_{L^{1}}\nonumber\\
\leq& C\left(\|g(u)\|_{L^{2}} +\|u\|_{1}^{2}\right)\nonumber\\
\leq& CE(0).
\label{209}
\end{align}
Similarly, we can obtain
\begin{equation}
\|P_{x}\|_{L^{2}}, \|P\|_{L^{\infty}},\|P_{x}\|_{L^{\infty}}\leq CE(0).
\label{2010}
\end{equation}

\section{Semi-linear system for smooth solutions}

\setcounter{equation}{0}

\label{sec:3}

In this section, we establish a semi-linear system for smooth solutions. To this end, it is essential to introduce the characteristic equation
\begin{equation}
\frac{dx(t)}{dt}=f'(u(t,x)).
\label{301}
\end{equation}
For any fixed point, the characteristic curve crossing the point $(t,x)$ is defined by setting
\begin{equation}
\gamma\rightarrow x^{c}(\gamma;t,x).
\label{302}
\end{equation}

In what follows, we use the energy density $(1+u_{x}^{2}(0,\bar{x}))$ to define the characteristic coordinate $Z=Z(t,x)$,
\begin{equation}
Z(t,x):=\int_{0}^{x^{c}(0;t,x)}(1+u_{x}^{2}(0,\bar{x}))d\bar{x}.
\label{303}
\end{equation}
Therefore, $Z(t,x)$ satisfies
\begin{equation}
Z_{t}+f'(u)Z_{x}=0, \ \ (t,x)\in \mathbb{R_{+}}\times\mathbb{R}.
\label{304}
\end{equation}
We also define $T=t$ to obtain the new coordinate $(T,Z)$. Then for any smooth function $h(T,Z)=h(t,Z(t,x))$, by \eqref{304}, we get
\begin{align}
h_{t}+f'(u)h_{x}
&=h_{T}T_{t}+h_{Z}Z_{t}+f'(u)\left(h_{T}T_{x}+h_{Z}Z_{x}\right)\nonumber\\
&=h_{T}\left(T_{t}+f'(u)T_{x}\right)+h_{Z}\left(Z_{t}+f'(u)Z_{x}\right)\nonumber\\
&=h_{T}
\label{305}
\end{align}
and
\begin{equation}
h_{x}=h_{T}T_{x}+h_{z}Z_{x}=h_{z}Z_{x}.
\label{306}
\end{equation}
Furthermore, we denote
\begin{equation*}
u(T,Z):=u(T,x(T,Z)),\ \ P(T,Z):=P(T,x(T,Z))\ \ \text{and}\ \ P_{x}(T,Z):=P_{x}(T,x(T,Z)).
\end{equation*}

In what follows, we define
\begin{equation}
w:=2\arctan u_{x}\ \ \text{and}\ \ v:=(1+u_{x}^{2})\frac{\partial x}{\partial Z}
\label{307}
\end{equation}
with $u_{x}=u_{x}(T,x(T,Z))$. By \eqref{307}, we can easily verify that
\begin{equation}
\frac{1}{1+u_{x}^{2}}=\cos^{2}\frac{w}{2},\ \ \frac{u_{x}^{2}}{1+u_{x}^{2}}=\sin^{2}\frac{w}{2},\ \
\frac{u_{x}}{1+u_{x}^{2}}=\frac{1}{2}\sin w,
\label{308}
\end{equation}
\begin{equation}
\frac{\partial x}{\partial Z}=\frac{v}{1+u_{x}^{2}}=v\cos^{2} \frac{w}{2}.
\label{309}
\end{equation}
By \eqref{309}, for any time $t=T$, we have
\begin{equation}
x(T,Z')-x(T,Z)=\int_{Z}^{Z'}\left(v\cos^{2} \frac{w}{2}\right)(T,s)ds.
\label{3010}
\end{equation}

Let $y=x(T,Z')$ and $x=x(T,Z)$. By using the identities \eqref{308}--\eqref{3010}, we get
\begin{align}
P(Z)
&=P(T,Z)\nonumber\\
&=\frac{1}{2}\int_{\mathbb{R}}e^{-|x(T,Z)-y|}\left(g(u)+\frac{f''(u)}{2}u_{x}^{2}
\right)(T,y)dy\nonumber\\
&=\frac{1}{2}\int_{\mathbb{R}}e^{-|\int_{Z}^{Z'}\left(v\cos^{2} \frac{w}{2}\right)(T,s)ds|}\left(g(u(Z'))\cos^{2} \frac{w(Z')}{2}+\frac{f''(u(Z'))}{2}\sin^{2} \frac{w(Z')}{2}\right)v(Z')dZ',
\label{3011}
\end{align}
and
\begin{align}
P_{x}(Z)
=&P_{x}(T,Z)\nonumber\\
=&\frac{1}{2}\left(\int_{x(T,Z)}^{+\infty}-\int_{-\infty}^{x(T,Z)}\right)e^{-|x(T,Z)-y|}\left(g(u)+\frac{f''(u)}{2}u_{x}^{2}
\right)(T,y)dy\nonumber\\
=&\frac{1}{2}\left(\int_{Z}^{+\infty}-\int_{-\infty}^{Z}\right)
e^{-|\int_{Z}^{Z'}\left(v\cos^{2} \frac{w}{2}\right)(T,s)ds|}\bigg(g(u(Z'))\cos^{2} \frac{w(Z')}{2}\nonumber\\
&+\frac{f''(u(Z'))}{2}\sin^{2} \frac{w(Z')}{2}\bigg)v(Z')dZ'.
\label{3012}
\end{align}

In what follows, we derive a closed semi-linear system for the unknowns $u, w$ and $v$ under the new variables $(T,Z)$. From \eqref{201} and \eqref{301}, we get
\begin{equation}
u_{T}(T,Z)=u_{t}(T,Z)+f'(u)u_{x}(T,Z)=-P_{x}(T,Z),
\label{3013}
\end{equation}
where $P_{x}(T,Z)$ is given at \eqref{3012}.

From \eqref{203} and \eqref{307}, we get
\begin{align}
w_{T}(T,Z)=&\frac{2}{1+u_{x}^{2}}\left(u_{tx}+f'(u)u_{xx}\right)(T,Z)\nonumber\\
=&\frac{2}{1+u_{x}^{2}}\left(-\frac{f''(u)}{2}u_{x}^{2}+g(u)-P\right)\nonumber\\
=&2\left(g(u)-P\right)\cos^{2}\frac{w}{2}-f''(u)\sin^{2}\frac{w}{2},
\label{3014}
\end{align}
where $P=P(T,Z)$ is given at \eqref{3011}.

Below, we will derive the equation for $v(T,Z)$. To this end, we need to use the following relation
\begin{equation}
Z_{tx}+f'(u)Z_{xx}=-f''(u)u_{x}Z_{x},
\label{3015}
\end{equation}
which can be derived from \eqref{304}. Then \eqref{203}, \eqref{305}, \eqref{307} and \eqref{3015} yield
\begin{align}
v_{T}(T,Z)=&\left(\frac{1+u_{x}^{2}}{Z_{x}}\right)_{T}\nonumber\\
=&\frac{Z_{x}(1+u_{x}^{2})_{T}-(1+u_{x}^{2})Z_{xT}}{Z^{2}_{x}}\nonumber\\
=&\frac{2u_{x}Z_{x}(u_{tx}+f'(u)u_{xx})
-(1+u_{x}^{2})\left(Z_{xt}+f'(u)Z_{xx}\right)}{Z^{2}_{x}}\nonumber\\
=&\frac{2u_{x}(u_{tx}+f'(u)u_{xx})
+f''(u)(1+u_{x}^{2})u_{x}}{Z_{x}}\nonumber\\
=&\frac{u_{x}}{Z_{x}}\left(2g(u)-2P+f''(u)\right)\nonumber\\
=&\left(g(u)-P+\frac{f''(u)}{2}\right)v\sin w.
\label{3016}
\end{align}

\section{Global solutions of the semi-linear system}

\setcounter{equation}{0}

\label{sec:4}

In this section, we prove the global existence of solutions for the semi-linear system.

Let initial datum $u_{0}(x)=\bar{u}\in H^{1}$ be given. We can transfer problem \eqref{201} into the following semi-linear system
\begin{align}
\begin{cases}
u_{T}=-P_{x},\\
w_{T}=2\left(g(u)-P\right)\cos^{2}\frac{w}{2}-f''(u)\sin^{2}\frac{w}{2},\\
v_{T}=\left(g(u)-P+\frac{f''(u)}{2}\right)v\sin w
\end{cases}
\label{401}
\end{align}
subject to the initial data
\begin{align}
\begin{cases}
u(0,Z)=\bar{u}(\bar{x}(Z)),\\
w(0,Z)=2\arctan \bar{u}_{x}(\bar{x}(Z)),\\
v(0,Z)=1,
\end{cases}
\label{402}
\end{align}
where $P$ and $P_{x}$ are given by \eqref{3011}--\eqref{3012}, respectively.

It is easy to verify that system \eqref{401} is invariant under translation by $2\pi$ in $w$. For simplicity, we choose $w\in[-\pi,\pi]$. We now consider system \eqref{401} as an  ordinary differential equations in the Banach space
\begin{equation}
X:=H^{1}(\mathbb{R})\times\left(L^{2}(\mathbb{R})\cap L^{\infty}(\mathbb{R})\right)\times L^{\infty}(\mathbb{R})
\label{403}
\end{equation}
with the norm
\begin{equation*}
\|(u,w,v)\|_{X}=\|u\|_{1}+\|w\|_{L^{2}}+\|w\|_{L^{\infty}}+\|v\|_{L^{\infty}}.
\end{equation*}

In what follows, we first prove the local existence of solutions for the Cauchy problem \eqref{401}--\eqref{402}. By the standard theory of ordinary differential equations in the Banach space, we only need to show that all functions on the right-hand side of system \eqref{401} are locally Lipschitz continuous. We then use the energy conservation property to extend the local solution to the global solution.

\begin{lemma}\label{lem401}
Let $\bar{u}\in H^{1}$. Then the Cauchy problem \eqref{401}--\eqref{402} has a unique solution defined on $[0,T]$ for some $T>0$.
\end{lemma}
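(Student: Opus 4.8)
The plan is to read \eqref{401}--\eqref{402} as an abstract ODE $\tfrac{d}{dT}\Phi = F(\Phi)$, $\Phi=(u,w,v)$, $\Phi(0)=\big(\bar u(\bar x(\cdot)),\,2\arctan\bar u_x(\bar x(\cdot)),\,1\big)$, in the Banach space $X$, where $F=(F_1,F_2,F_3)$ with $F_1=-P_x$, $F_2=2(g(u)-P)\cos^2\tfrac w2-f''(u)\sin^2\tfrac w2$, $F_3=(g(u)-P+\tfrac{f''(u)}2)\,v\sin w$, and to invoke the standard theory of ODEs in Banach spaces. Everything then reduces to showing that, on a suitable open neighborhood $\Omega$ of $\Phi(0)$ in $X$, the map $F$ takes values in $X$ and is Lipschitz continuous on bounded subsets of $\Omega$. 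I would check this componentwise.

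Three elementary facts remove most of the nonlinear structure. First, with the normalization $w\in[-\pi,\pi]$ the maps $w\mapsto\cos^2\tfrac w2,\ \sin^2\tfrac w2,\ \sin w$ are bounded by $1$ and globally Lipschitz on this interval, and $\sin^2\tfrac w2\le\tfrac14w^2$. Second, since $H^1(\mathbb R)\hookrightarrow L^\infty(\mathbb R)$ in one space dimension, on a bounded subset of $X$ the norm $\|u\|_{L^\infty}$ is uniformly controlled, so $g(u),g'(u),f''(u),f'''(u)$ are uniformly bounded, $u\mapsto g(u),f''(u)$ are Lipschitz in the $H^1$-norm, and $|g(u)|\le C|u|$ gives $g(u)\in L^2$. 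Third, $r\mapsto e^{-|r|}$ is bounded by $1$ and $1$-Lipschitz. Granting the bounds on $P,P_x$ described below, $F_2$ and $F_3$ are then finite products of uniformly bounded, locally Lipschitz factors: $F_3\in L^\infty$ is immediate; $F_2\in L^2\cap L^\infty$ uses in addition $g(u)\in L^2$, $\sin^2\tfrac w2\in L^2$ (since $w\in L^2\cap L^\infty\subset L^4$) and $P\in L^2$; and $F_1=-P_x\in H^1$ follows from the change-of-variables identity $\partial_Z P_x=v\cos^2\tfrac w2\,(P-g(u))-\tfrac{f''(u)}2\,v\sin^2\tfrac w2$ (the $(T,Z)$-counterpart of $P_{xx}=P-g(u)-\tfrac{f''(u)}2u_x^2$), together with $P,g(u)\in L^2$, $v\in L^\infty$, $\sin^2\tfrac w2\in L^2$.

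Thus the core is to show that the nonlocal terms $P,P_x$ defined by \eqref{3011}--\eqref{3012} are well-defined elements of $H^1(\mathbb R)$ — it suffices to bound $\|P\|_{L^2},\|P_x\|_{L^2},\|\partial_Z P\|_{L^2},\|\partial_Z P_x\|_{L^2},\|P\|_{L^\infty},\|P_x\|_{L^\infty}$ — and that $\Phi\mapsto(P,P_x)$ is Lipschitz on bounded subsets of $\Omega$. Writing the kernel as $e^{-|\Xi(Z)-\Xi(Z')|}$ with $\Xi(Z):=\int_0^Z(v\cos^2\tfrac w2)(s)\,ds$, and $h:=\big(g(u)\cos^2\tfrac w2+\tfrac{f''(u)}2\sin^2\tfrac w2\big)v$, one bounds $P,P_x$ in $L^\infty$ and $L^2$ by Young/H\"older-type estimates after the substitution $d\Xi=v\cos^2\tfrac w2\,dZ$, using $\int|v\sin^2\tfrac w2|\,dZ\le\tfrac14\|v\|_{L^\infty}\|w\|_{L^2}^2$ and $\int|u|^2v\cos^2\tfrac w2\,dZ\le\|v\|_{L^\infty}\|u\|_{L^2}^2$; the bounds on $\partial_Z P=v\cos^2\tfrac w2\,P_x$ and on $\partial_Z P_x$ then follow from the identities above. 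Lipschitz dependence is obtained by splitting the difference of two such integrals into a difference of kernels — controlled by $|e^{-|a|}-e^{-|b|}|\le|a-b|$ and the Lipschitz dependence of $\Xi$ on $(w,v)$ — and a difference of the remaining integrands, which is Lipschitz in $(u,w,v)$.

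The point requiring genuine care — the main obstacle — is the convergence and uniform control of the integrals defining $P,P_x$: the kernel $e^{-|\Xi(Z)-\Xi(Z')|}$ decays as $|Z'|\to\infty$ only when $\Xi(Z')\to\pm\infty$, i.e. only when $v\cos^2\tfrac w2$ does not decay too fast at infinity, whereas a generic element of $X$ gives no such control and $h$ need not lie in $L^1$. This is exactly why one works on a neighborhood $\Omega$ of $\Phi(0)$: there $v$ stays close to $v(0,\cdot)\equiv1$, hence bounded below by some $c>0$, so $\Xi$ is Lipschitz, nondecreasing and surjective onto $\mathbb R$ (strictly increasing off the finite-measure set $\{w=\pm\pi\}$, on which the contributions to $P,P_x$ are trivially bounded), and the above substitution is legitimate; the short-time solution delivered by the abstract ODE theorem remains in $\Omega$ by continuity in $T$ — consistently with $v(T,Z)=v(0,Z)\exp\!\big(\int_0^T(g(u)-P+\tfrac{f''(u)}2)\sin w\,dT'\big)$, whose exponent is uniformly bounded. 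With $F$ locally Lipschitz on $\Omega$, the standard theory of ODEs in the Banach space $X$ yields a unique solution of \eqref{401}--\eqref{402} on some interval $[0,T]$ with $T>0$, which is the assertion.
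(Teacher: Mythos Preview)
Your strategy coincides with the paper's: view \eqref{401}--\eqref{402} as an ODE in $X$, verify that the right-hand side is locally Lipschitz on sets of the form \eqref{405}, treat the pointwise nonlinearities in $F_2,F_3$ by bounded/Lipschitz calculus (using $H^1\hookrightarrow L^\infty$, $g(0)=0$, and $\sin^2\tfrac w2\le\tfrac14w^2$), and isolate $(u,w,v)\mapsto(P,P_x)$ as the only nontrivial piece. Your identity $\partial_ZP_x=v\cos^2\tfrac w2\,(P-g(u))-\tfrac{f''(u)}2\,v\sin^2\tfrac w2$ is exactly the paper's \eqref{4031}, and your Lipschitz argument via $|e^{-|a|}-e^{-|b|}|\le|a-b|$ is an equivalent alternative to the paper's bounding of the Fr\'echet derivatives \eqref{4019}.

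The one point where you and the paper diverge is the control of the kernel $e^{-|\int_Z^{Z'}v\cos^2\frac w2\,ds|}$. The paper's key step is \eqref{409}--\eqref{4010}: from $\|w\|_{L^2}\le\mu$ one has $\mathrm{meas}\{|w/2|\ge\pi/4\}\le\mu^2$, hence $\int_Z^{Z'}v\cos^2\tfrac w2\,ds\ge\tfrac{v^-}{2}\big((Z'-Z)-\mu^2\big)$, so the kernel is dominated by the explicit $L^1$ function $\Lambda$ in \eqref{4011}, and Young's inequality does the rest. You instead propose a change of variable $d\Xi=v\cos^2\tfrac w2\,dZ$. Your stated justification for the surjectivity of $\Xi$ --- ``$v$ stays close to $1$, hence bounded below by $c>0$'' and ``strictly increasing off the finite-measure set $\{w=\pm\pi\}$'' --- is not sufficient by itself: $v\ge c$ and strict monotonicity on a co-finite-measure set do not prevent $\int_0^\infty v\cos^2\tfrac w2\,dZ<\infty$. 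What actually forces $\Xi$ to be surjective (indeed, coarsely bi-Lipschitz) is precisely the paper's estimate, which you essentially have in the form $\int v\sin^2\tfrac w2\,dZ\le\tfrac14\|v\|_{L^\infty}\|w\|_{L^2}^2$ but do not connect to $\Xi$: writing $v\cos^2\tfrac w2=v-v\sin^2\tfrac w2$ gives $\Xi(Z')-\Xi(Z)\ge c(Z'-Z)-\tfrac14\|v\|_{L^\infty}\|w\|_{L^2}^2$. Once this is made explicit, your substitution and the paper's dominating kernel $\Lambda$ are the same device, and either route closes the argument. A minor caution: after the literal substitution the $\sin^2$ part of the integrand becomes $\tfrac{f''(u)}{2}\tan^2\tfrac w2$, which is unbounded; this is harmless (it is still $L^1$ in $\Xi$, and $e^{-|\cdot|}\in L^2$ gives $L^2$ output), but keeping the integral in $Z$ and dominating the kernel by $\Lambda$, as the paper does, avoids this cosmetic singularity.
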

\begin{proof}
To establish the local well-posedness, it suffices to prove the operator determined by the
right-hand side of \eqref{401}, which maps $(u,w,v)$ to
\begin{equation}
\left(-P_{x},\ \ 2(g(u)-P)\cos^{2}\frac{w}{2}-f''(u)\sin^{2}\frac{w}{2},\ \ \left(g(u)-P+\frac{f''(u)}{2}\right)v\sin w\right)
\label{404}
\end{equation}
is Lipschitz continuous on every bounded domain $\Omega \subset X$ of the following form
\begin{equation}
\Omega=\left\{(u,w,v):\|u\|_{1}\leq \kappa, \|w\|_{L^{2}}\leq \mu, \|w\|_{L^{\infty}}\leq \frac{3\pi}{2}, v(x)\in[v^{-},v^{+}]\
 \text{for a.e.}\ x\in \mathbb{R}\right\}
\label{405}
\end{equation}
for any positive constants $\kappa, \mu, v^{-}$ and $v^{+}$.

Since $f(u), g(u)\in C^{\infty}(\mathbb{R},\mathbb{R})$ and $g(0)=0$, and the uniform bounds on $w, v$, and the Sobolev inequality
\begin{equation}
\|u\|_{L^{\infty}}\leq \frac{1}{\sqrt{2}}\|u\|_{1},
\label{406}
\end{equation}
it is clear that the maps
\begin{equation}
2g(u)\cos^{2}\frac{w}{2},\ -f''(u)\sin^{2}\frac{w}{2}\ \ \text{and}\  \ \left(g(u)+\frac{f''(u)}{2}\right)v\sin w
\label{407}
\end{equation}
are all Lipschitz continuous as maps from $\Omega$ into $L^{2}\cap L^{\infty}$.
Therefore, we only need to prove the maps
\begin{equation}
(u,w,v)\mapsto (P,P_{x})
\label{408}
\end{equation}
are Lipschitz continuous from $\Omega$ into $L^{2}\cap L^{\infty}$. To this end, it suffices to show that the above maps are Lipschitz continuous from $\Omega$ into $H^{1}$.

In what follows, we derive some estimates for future. For $(u,w,v)\in\Omega$, we have
\begin{align}
measure\left\{Z\in \mathbb{R}:\left|\frac{w(Z)}{2}\right|\geq \frac{\pi}{4}\right\}
\leq& measure\left\{Z\in \mathbb{R}:\sin^{2}\frac{w(Z)}{2}\geq \frac{1}{4}\right\}\nonumber\\
\leq& 4\int_{\left\{Z\in \mathbb{R}:\sin^{2}\frac{w(Z)}{2}\geq \frac{1}{4}\right\}}\sin^{2}\frac{w(Z)}{2}dZ\nonumber\\
\leq& \int_{\left\{Z\in \mathbb{R}:\sin^{2}\frac{w}{2}\geq \frac{1}{4}\right\}}w^{2}(Z)dZ\nonumber\\
\leq& \mu^{2}.
\label{409}
\end{align}
Therefore, for any $Z_{1}<Z_{2}$, we get
\begin{equation}
\int_{Z_{1}}^{Z_{2}}v(s)\cos^{2}\frac{w(s)}{2}ds\geq \int_{\left\{s\in[Z_{1},Z_{2}],\left|\frac{w(s)}{2}\right|\leq \frac{\pi}{4}\right\}}\frac{v^{-}}{2}ds\geq\frac{v^{-}}{2}\left((Z_{2}-Z_{1})-\mu^{2}\right).
\label{4010}
\end{equation}
The inequality \eqref{4010} is a key estimate which guarantees that the exponential term in the formulate \eqref{3011}--\eqref{3012} for $P$ and $P_{x}$ decreasing quickly as $|Z-Z'|\rightarrow \infty$. Below, we introduce the exponentially decaying function
\begin{equation}
\Lambda(\eta):=\min \left\{1,e^{\left(\frac{\mu^{2}}{2}-\frac{|\eta|}{2}\right)v^{-}}\right\}.
\label{4011}
\end{equation}
Thus, we get
\begin{equation}
\|\Lambda(\eta)\|_{L^{1}}
=\left(\int_{|\eta|\leq \mu^{2}}+\int_{|\eta|\geq \mu^{2}}\right)\Lambda(\eta)d\eta
=2\mu^{2}+\frac{4}{v^{-}}.
\label{4012}
\end{equation}

In what follows, we prove that $P, P_{x}\in H^{1}$, namely,
\begin{equation}
P,\ \partial_{Z}P,\ P_{x},\ \partial_{Z}P_{x}\in L^{2}(\mathbb{R}).
\label{4013}
\end{equation}
It is obvious that the priori estimates for $P$ and $P_{x}$ are totally similar. For simplicity, we only consider the case for $P_{x}$.

From \eqref{3012}, we get
\begin{equation}
|P_{x}(Z)|\leq \frac{v^{+}}{2}\left|\Lambda\ast \left(g(u)\cos^{2} \frac{w}{2}+\frac{f''(u)}{2}\sin^{2} \frac{w}{2}\right)(Z)\right|.
\label{4014}
\end{equation}
Therefore, using the standard properties of convolutions, the Sobolev inequality 
and Young's inequality, we get
\begin{align}
\|P_{x}(Z)\|_{L^{2}}&\leq \frac{v^{+}}{2}\|\Lambda\|_{L^{1}}\left(\|g(u)\|_{L^{2}}
+\frac{|f''(u)|}{2}\|w^{2}\|_{L^{2}}\right)\nonumber\\
&\leq \frac{v^{+}}{2}\|\Lambda\|_{L^{1}}\left(C(\|u\|_{1})\|u\|_{L^{2}}
+\frac{|f''(u)|}{2}\|w\|_{L^{\infty}}\|w\|_{L^{2}}\right)\nonumber\\
&< \infty.
\label{4015}
\end{align}

Next, differentiating $P_{x}$ with respect to $Z$, we get
\begin{align}
\partial_{Z}P_{x}(Z)=&-\left(g(u(Z))\cos^{2} \frac{w(Z)}{2}+\frac{f''(u(Z))}{2}\sin^{2} \frac{w(Z)}{2}\right)v(Z)\nonumber\\
&+\frac{1}{2}\left(\int_{Z}^{+\infty}
-\int_{-\infty}^{Z}\right)e^{-\left|\int_{Z}^{Z'}(v\cos^{2} \frac{w}{2})(T,s)ds\right|}v(Z)\cos^{2} \frac{w(Z)}{2}\sign{Z'-Z}\nonumber\\
&\cdot \left(g(u(Z'))\cos^{2} \frac{w(Z')}{2}+\frac{f''(u(Z'))}{2}\sin^{2} \frac{w(Z')}{2}\right)v(Z')dZ'.
\label{4016}
\end{align}
Therefore,
\begin{align}
|\partial_{Z}P_{x}(Z)|\leq&v^{+}\left|g(u(Z))+\frac{|f''(u(Z))|}{8}v^{2}\right|\nonumber\\
&+\frac{(v^{+})^{2}}{2}\left|\Lambda\ast \left(g(u)\cos^{2} \frac{w}{2}+\frac{f''(u)}{2}\sin^{2} \frac{w}{2}\right)(Z)\right|.
\label{4017}
\end{align}
Furthermore, applying standard properties of convolutions and Young's inequality, we get
\begin{align}
\|\partial_{Z}P_{x}(Z)\|_{L^{2}}\leq& v^{+}\left(\|g(u)\|_{L^{2}}+\frac{|f''(u)|}{8}\|w^{2}\|_{L^{2}}\right)\nonumber\\
&+\frac{(v^{+})^{2}}{2}\|\Lambda\|_{L^{1}} \left(\|g(u)\|_{L^{2}}+\frac{|f''(u)|}{8}\|w^{2}\|_{L^{2}}\right)\nonumber\\
\leq& \left(v^{+}+\frac{(v^{+})^{2}}{2}\|\Lambda\|_{L^{1}}\right)\left(C(\|u\|_{1})
\|u\|_{L^{2}}+\frac{|f''(u)|}{8}\|w\|_{L^{\infty}}\|w\|_{L^{2}}\right)\nonumber\\
<& \infty.
\label{4018}
\end{align}
Thus, $P_{x}\in H^{1}(\mathbb{R})$. Note that the estimates for $P$ and $P_{x}$ can be obtained by the same method. Therefore, the proof of the relation \eqref{4013} is completed.

In what follows, we verify the Lipschitz continuity of the map given in \eqref{409}. This can be done by proving that for $(u,w,v)\in \Omega$, the partial derivatives
\begin{equation}
\frac{\partial P}{\partial u},\ \frac{\partial P}{\partial w},\ \frac{\partial P}{\partial v},\
\frac{\partial P_{x}}{\partial u},\ \frac{\partial P_{x}}{\partial w},\ \frac{\partial P_{x}}{\partial v}
\label{4019}
\end{equation}
are uniformly bounded linear operators from the appropriate spaces into $H^{1}$. Due to the fact that all the partial derivatives can be estimated by the same method, so we only detail the argument for $\frac{\partial P_{x}}{\partial u}$.

For every test function $\phi\in H^{1}$, the operators $\frac{\partial P_{x}}{\partial u}$ and $\frac{\partial (\partial_{Z}P_{x})}{\partial u}$ at a given point $(u,w,v)\in \Omega$ are defined by
\begin{align}
&\left(\frac{\partial P_{x}(u,w,v)}{\partial u}\cdot \phi\right)(Z)\nonumber\\
=&\frac{1}{2}\left(\int_{Z}^{+\infty}-\int_{-\infty}^{Z}\right)
e^{-\left|\int_{Z}^{Z'}(v\cos^{2} \frac{w}{2})(T,s)ds\right|}\left(g'(u(Z'))\cos^{2} \frac{w(Z')}{2}+\frac{f'''(u(Z'))}{2}\sin^{2} \frac{w(Z')}{2}\right)\nonumber\\
&\cdot v(Z')\phi(Z') dZ'
\label{4020}
\end{align}
and
\begin{align}
&\left(\frac{\partial (\partial_{Z}P_{x})(u,w,v)}{\partial u}\cdot \phi\right)(Z)\nonumber\\
=&-\left(g'(u(Z))\cos^{2} \frac{w}{2}+\frac{f'''(u(Z))}{2}\sin^{2} \frac{w}{2}\right)v(Z)\phi(Z)\nonumber\\
&+\frac{1}{2}\left(\int_{Z}^{+\infty}-\int_{-\infty}^{Z}\right)
e^{-\left|\int_{Z}^{Z'}(v\cos^{2} \frac{w}{2}ds)(T,s)\right|}v(Z)\cos^{2} \frac{w(Z)}{2}\sign{Z'-Z}\nonumber\\
&\cdot \left(g'(u(Z'))\cos^{2} \frac{w(Z')}{2}+\frac{f'''(u(Z'))}{2}\sin^{2} \frac{w(Z')}{2}\right)v(Z')\phi(Z') dZ'.
\label{4026}
\end{align}
Therefore, we obtain
\begin{align}
\left\|\frac{\partial P_{x}}{\partial u}\cdot \phi\right\|_{L^{2}}
\leq&v^{+}\left\|\Lambda\ast \left(g'(u)+\frac{f'''(u)}{2}\right)\right\|_{L^{2}}\|\phi\|_{L^{\infty}}\nonumber\\
\leq& Cv^{+}\|\Lambda\|_{L^{1}} \|u\|_{L^{2}}\|\phi\|_{1}\nonumber\\
\leq& Cv^{+}\|\Lambda\|_{L^{1}} \|u\|_{1}\|\phi\|_{1}\nonumber\\
<& \infty
\label{4022}
\end{align}
and
\begin{align}
&\left\|\frac{\partial(\partial_{Z} P_{x})}{\partial u}\cdot \phi\right\|_{L^{2}}\nonumber\\
\leq& v^{+}\left\|g'(u)+\frac{f'''(u)}{2}\right\|_{L^{2}}\|\phi\|_{L^{\infty}}
+\frac{(v^{+})^{2}}{2}\left\|\Lambda\ast \left(g'(u)+\frac{f'''(u)}{2}\right)\right\|_{L^{2}}\|\phi\|_{L^{\infty}}\nonumber\\
\leq&\left(v^{+}+\frac{(v^{+})^{2}}{2}\|\Lambda\|_{L^{1}}\right) \left(\|g'(u)+\frac{f'''(u)}{2}\|_{L^{2}}\right)\|\phi\|_{1}\nonumber\\
<& \infty,
\label{4023}
\end{align}
where we used the facts that
$$\|u\|_{L^{\infty}}\leq \|u\|_{1},\ \
\|\phi\|_{L^{\infty}}\leq \|\phi\|_{1}
\ \text{and}\ |g'(u)|\leq C(\|u\|_{1})|u|.$$
Hence we obtain that $\frac{\partial P_{x}}{\partial u}$ is a bounded linear operator from $H^{1}$ into $H^{1}$. As above, we can obtain the boundedness of other partial derivatives, thus the uniform Lipschitz continuous of the map in \eqref{408} is verified. Then using the standard ODE theory in the Banach space, we can establish the local existence of solutions for the Cauchy problem \eqref{401}--\eqref{402}, namely, the Cauchy problem \eqref{401}--\eqref{402} has a unique solution on $[0,T]$ for some $T>0$.
This completes the proof of Lemma \ref{lem401}.
\end{proof}

In what follows, we extend the local solution obtained in Lemma \ref{lem401} globally. To this end, it suffices to prove that for all $T<\infty$,
\begin{equation}
\|u\|_{1}+\|w\|_{L^{2}}+\|w\|_{L^{\infty}}
+\|v\|_{L^{\infty}}+\left\|\frac{1}{v}\right\|_{L^{\infty}}<\infty.
\label{4024}
\end{equation}

\begin{lemma}\label{lem402}
Let $\bar{u}\in H^{1}$. Then the Cauchy problem \eqref{401}--\eqref{402} has a unique solution defined for all $T>0$.
\end{lemma}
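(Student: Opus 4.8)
The plan is to upgrade the local solution from Lemma~\ref{lem401} to a global one by establishing the a priori bound \eqref{4024} on every finite interval $[0,T]$, since the standard ODE continuation principle in the Banach space $X$ then rules out finite-time blow-up. The crucial quantity to control is the total energy, which in the new variables reads $E(T)=\int_{\mathbb{R}} v\,dZ$ after noticing that $u^{2}+u_{x}^{2}$ integrated in $x$ becomes $\int (u^2\cos^2\tfrac{w}{2}+\sin^2\tfrac{w}{2})\,v\,dZ$; actually the cleanest route is to track $\int_{\mathbb{R}}\bigl(1+u_x^2\bigr)\partial_x \text{(something)}$, so I would first derive from the system \eqref{401} an evolution equation for the energy density and show it is conserved, i.e. $\frac{d}{dT}\int_{\mathbb{R}} v\cos^2\tfrac{w}{2}\,(1+u_x^2)\,dZ$ reduces, via \eqref{3014} and \eqref{3016}, to the conservation law \eqref{207}. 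This gives $\|u\|_{1}\le C(E(0))$ uniformly in $T$, and by \eqref{406} also $\|u\|_{L^\infty}\le C$.

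Next I would bound $w$. The uniform bound $\|w\|_{L^\infty}\le\pi$ is automatic from the choice $w\in[-\pi,\pi]$ and the invariance noted before \eqref{403}, but to stay inside a domain $\Omega$ of the form \eqref{405} with $\|w\|_{L^\infty}\le\frac{3\pi}{2}$ one must check the flow does not push $w$ past $\pm\pi$ in finite time; since the right-hand side of the $w$-equation is bounded (by \eqref{209}--\eqref{2010} and the bound on $f''$), $w$ varies Lipschitz-continuously in $T$ and stays in a compact set. For $\|w\|_{L^2}$ I would use Gronwall: from the second equation in \eqref{401}, $\frac{d}{dT}\|w\|_{L^2}^2 = 2\int w\,w_T\,dZ$, and the integrand is bounded by $C(|g(u)|+|P|+|f''(u)|)|w|$, so $\frac{d}{dT}\|w\|_{L^2}\le C(E(0))\bigl(1+\|w\|_{L^2}\bigr)$, giving at most exponential growth on $[0,T]$, hence finiteness. (The term $2(g(u)-P)\cos^2\tfrac{w}{2}$ is not pointwise small, but $g(u)$ is in $L^2$ and $P$ is in $L^2$, so $\|w_T\|_{L^2}$ is controlled, which is what the Gronwall argument actually needs.)

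Finally I would control $v$ and $1/v$. From the third equation $v_T=\bigl(g(u)-P+\tfrac{f''(u)}{2}\bigr)v\sin w$, the coefficient $\bigl(g(u)-P+\tfrac{f''(u)}{2}\bigr)\sin w$ is bounded in $L^\infty$ by some constant $K=K(E(0))$ uniformly in $T$ (using $\|u\|_{L^\infty},\|P\|_{L^\infty}\le C$ and $|f''|\le C_1$). Therefore pointwise in $Z$, $|\partial_T\ln v|\le K$, whence $e^{-KT}\le v(T,Z)\le e^{KT}$ for a.e. $Z$; this yields $\|v\|_{L^\infty}\le e^{KT}<\infty$ and $\|1/v\|_{L^\infty}\le e^{KT}<\infty$ on $[0,T]$. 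Collecting these bounds establishes \eqref{4024}, and the continuation argument finishes the proof that the solution of \eqref{401}--\eqref{402} is global. The main obstacle I anticipate is the energy-conservation step: one must carefully verify that the formal computation $\frac{d}{dT}E(T)=0$ in the $(T,Z)$ variables is justified (differentiation under the integral, integrability of all terms), and that the resulting control on $\int v\,dZ$ together with the $w$-equation indeed pins down $\|u\|_1$; all the remaining bounds are then routine Gronwall or logarithmic estimates.
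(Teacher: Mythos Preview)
Your overall strategy---prove energy conservation in the new variables, then bootstrap to the bound \eqref{4024}---is the same as the paper's. But there is a real gap in the order of your argument, and it is not merely cosmetic.

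You write that energy conservation ``gives $\|u\|_{1}\le C(E(0))$ uniformly in $T$.'' It does not. The conserved quantity is
\[
E(T)=\int_{\mathbb{R}}\Bigl(u^{2}\cos^{2}\tfrac{w}{2}+\sin^{2}\tfrac{w}{2}\Bigr)v\,dZ
=\int_{\mathbb{R}}(u^{2}+u_{x}^{2})\,dx,
\]
which is the $H^{1}$ norm in the \emph{physical} variable $x$. The norm $\|u\|_{1}$ appearing in \eqref{4024} and in the Banach space $X$ is the $H^{1}$ norm in the \emph{characteristic} variable $Z$, namely $\int u^{2}\,dZ+\int u_{Z}^{2}\,dZ$. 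These differ by the Jacobian factor $x_{Z}=v\cos^{2}\tfrac{w}{2}$, which can degenerate (it vanishes where $w=\pm\pi$) and is not yet known to be bounded. So from $E(T)=E(0)$ you only get $\|u\|_{L^{\infty}}$ (via the argument in \eqref{4037}); the full $\|u\|_{H^{1}(Z)}$ bound cannot come before you control $v$.

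The correct dependency chain, which the paper follows, is: energy conservation $\Rightarrow$ $\|u\|_{L^{\infty}}$ $\Rightarrow$ $\|P\|_{L^{\infty}},\|P_{x}\|_{L^{\infty}}$ in the new variables $\Rightarrow$ the exponential two-sided bound on $v$ $\Rightarrow$ the $\|w\|_{L^{\infty}}$ bound. Only \emph{after} $v$ is controlled can you attack $\|u\|_{H^{1}(Z)}$. The paper does this by differentiating $\|u\|_{L^{2}(Z)}^{2}$ and $\|u_{Z}\|_{L^{2}(Z)}^{2}$ in $T$ and reducing to $L^{1}(Z)$ bounds on $P_{x}$ and $\partial_{Z}P_{x}$; those $L^{1}$ bounds in turn require the lower bound on $v$ to get exponential decay of the kernel (the estimate \eqref{4047} and the function $\Gamma$ in \eqref{4048}). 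Your proposal skips this entire step. The same circularity infects your $\|w\|_{L^{2}}$ argument: you invoke $g(u)\in L^{2}$ and $P\in L^{2}$, but these are $L^{2}(Z)$ statements that again presuppose control of $\|u\|_{L^{2}(Z)}$.

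Two smaller points. First, the identity $u_{Z}=\tfrac12 v\sin w$ (the paper's \eqref{4025}) is not automatic; it has to be propagated from $T=0$ using the system \eqref{401}, and it is used both in the energy computation and in bounding $\|u_{Z}\|_{L^{\infty}}$. You should state and prove it. Second, citing \eqref{209}--\eqref{2010} for $\|P\|_{L^{\infty}}$ is formally in the $(t,x)$ picture; you need the analogue \eqref{4038} expressed through \eqref{3011}, which follows directly from the conserved energy once you have it.
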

\begin{proof}
For the local solution obtained in Lemma \ref{lem401}, we claim that
\begin{equation}
u_{Z}=\frac{u_{x}}{Z_{x}}
=\frac{u_{x}}{1+u_{x}^{2}}v
=\frac{1}{2}v\sin w.
\label{4025}
\end{equation}
Indeed, from \eqref{401}, we have
\begin{align}
u_{ZT}=u_{TZ}=&-\partial_{Z}P_{x}\nonumber\\
=&\left((g(u)-P(Z))\cos^{2} \frac{w}{2}+\frac{f''(u)}{2}\sin^{2}\frac{w}{2}\right)v(Z).
\label{4026}
\end{align}
On the other hand,
\begin{align}
&\left(\frac{1}{2}v\sin w\right)_{T}\nonumber\\
=&\frac{1}{2}v_{T}\sin w+\frac{1}{2}vw_{T}\cos w\nonumber\\
=&\frac{1}{2}v\sin^{2} w\left(g(u)-P(Z)+\frac{f''(u)}{2}\right)
+\frac{v}{2}\cos w\left(2(g(u)-P(Z))\cos^{2} \frac{w}{2}-f''(u)\sin^{2} \frac{w}{2}\right)\nonumber\\
=&\left((g(u)-P(Z))\cos^{2} \frac{w}{2}+\frac{f''(u)}{2}\sin^{2}\frac{w}{2}\right)v(Z).
\label{4027}
\end{align}
Applying the initial data, we know
\begin{equation}
u_{Z}=\frac{1}{2}\sin w\ \ \text{and}
\ \ v=1,\ \ as\ \ T=0,
\label{4028}
\end{equation}
which means that \eqref{4025} holds initially. Thus, we infer that \eqref{4025} remains valid for all $T$ as long as the solution exists.

In what follows, we verify the boundedness of \eqref{4024}. To this end, we check the conservation law $E(t)$. In the new system \eqref{401}--\eqref{402}, the conservation law of $E(T)$ read
\begin{equation}
E(T)=\int_{\mathbb{R}}\left(u^{2}\cos^{2}\frac{w}{2}
+\sin^{2}\frac{w}{2}\right)v(T,Z)dZ=\bar{E}(0).
\label{4029}
\end{equation}
To prove \eqref{4029}, it is useful to give the following identities in terms of the $Z$-derivatives.
\begin{equation}
P_{Z}=v(Z) P_{x}(Z)\cos^{2} \frac{w(Z)}{2}
\label{4030}
\end{equation}
and
\begin{equation}
\partial_{Z}P_{x}=-\left((g(u)-P(Z))\cos^{2} \frac{w}{2}+\frac{f''(u)}{2}\sin^{2}\frac{w}{2}\right)v(Z).
\label{4031}
\end{equation}

Applying \eqref{401}, \eqref{4030} and \eqref{4031}, a direct calculation reveals that
\begin{align}
\frac{dE(T)}{dT}=&\int_{\mathbb{R}}\left(\left(u^{2}\cos^{2}\frac{w}{2}
+\sin^{2}\frac{w}{2}\right)v(T,Z)\right)_{T}dZ\nonumber\\
=&\int_{\mathbb{R}}\bigg(\left(2uu_{T}\cos^{2}\frac{w}{2}
-u^{2}w_{T}\cos\frac{w}{2}\sin\frac{w}{2}
+w_{T}\sin \frac{w}{2}\cos \frac{w}{2}\right)v\nonumber\\
&+\left(u^{2}\cos^{2}\frac{w}{2}+\sin^{2}\frac{w}{2}\right)v_{T}\bigg)dZ\nonumber\\
=&\int_{\mathbb{R}}\Bigg\{\bigg(-2uP_{x}\cos^{2}\frac{w}{2}-2u^{2}(g(u)-P(Z))\sin \frac{w}{2}\cos^{3} \frac{w}{2}\nonumber\\
&+f''(u)u^{2}\sin^{3} \frac{w}{2}\cos \frac{w}{2}
+2(g(u)-P(Z))\sin \frac{w}{2}\cos^{3} \frac{w}{2}-f''\sin^{3} \frac{w}{2}\cos \frac{w}{2}\bigg)v\nonumber\\
&+\left(u^{2}\cos^{2}\frac{w}{2}
+\sin^{2}\frac{w}{2}\right)\left(g(u)-P(Z)+\frac{f''(u)}{2}\right)v\sin w\Bigg\}dZ\nonumber\\
=&\int_{\mathbb{R}}\left(-2uP_{x}\cos^{2}\frac{w}{2}+\frac{f''(u)}{2}u^{2}\sin w
+(g(u)-P(Z))\sin w\right)v(T,Z)dZ.
\label{4032}
\end{align}
In view of \eqref{4025} and \eqref{4030}, we have
\begin{equation}
(uP)_{Z}=u_{Z}P+uP_{Z}=vP\sin \frac{w}{2}\cos \frac{w}{2}+uvP_{x}\cos^{2} \frac{w}{2}
\label{4033}
\end{equation}
and
\begin{equation}
g(u)v\sin w=g(u)u_{Z}=(G(u))_{Z},
\label{4034}
\end{equation}
where $G(u)=\int_{0}^{u}g(s)ds$.

On the other hand,
\begin{align}
\frac{f''(u)}{2}vu^{2}\sin w
=&f''(u)u^{2}u_{Z}\nonumber\\
=&\left(f'(u)u^{2}\right)_{Z}-2f'(u)uu_{Z}\nonumber\\
=&\left(f'(u)u^{2}\right)_{Z}-2\left((f(u)u)_{Z}-f(u)u_{Z}\right)\nonumber\\
=&\left(f'(u)u^{2}\right)_{Z}-2(f(u)u)_{Z}+2f(u)u_{Z}\nonumber\\
=&\left(f'(u)u^{2}\right)_{Z}-2(f(u)u)_{Z}+2(F(u))_{Z},
\label{4035}
\end{align}
where $F(u)=\int_{0}^{u}f(s)ds$.

Therefore,
\begin{align}
\frac{dE(T)}{dT}=&\int_{\mathbb{R}}\left(\left(u^{2}\cos^{2}\frac{w}{2}
+\sin^{2}\frac{w}{2}\right)v(T,Z)\right)_{T}dZ\nonumber\\
=&\int_{\mathbb{R}}\left(G(u)-2uP+f'(u)u^{2}-2f(u)u+2F(u)\right)_{Z}dZ\nonumber\\
=&0,
\label{4036}
\end{align}
where in deriving the last equality we have used the asymptotic property
\begin{equation*}
\lim_{|Z|\rightarrow \infty}u(Z)=0\ \ \text{as}\ \ u\in H^{1}(\mathbb{R}),
\end{equation*}
and the fact that $P(Z)$ is uniformly bounded. This proves \eqref{4029}.

We have now proved the conservation law \eqref{4029} in the new variables along any solution of \eqref{401}--\eqref{402}. In what follows, we use the conservation law \eqref{4029} to derive a priori estimate on $\|u(T)\|_{L^{\infty}}$. It is clear that
\begin{align}
\sup_{Z\in\mathbb{R}}\left|u^{2}(T,Z)\right|
\leq 2\int_{\mathbb{R}}\left|uu_{Z}\right|dZ
&\leq 2\int_{\mathbb{R}}\left|u\sin \frac{w}{2} \cos \frac{w}{2}\right|v dZ\nonumber\\
&\leq \int_{\mathbb{R}} \left|\sin^{2} \frac{w}{2}+u^{2}\cos^{2} \frac{w}{2}\right|vdZ\nonumber\\
&\leq \bar{E}(0).
\label{4037}
\end{align}

From \eqref{4029} and \eqref{3011}, we can easily verify
\begin{align}
\|P(T)\|_{L^{\infty}}\leq& \frac{1}{2}\left\|e^{-|x|}\right\|_{L^{\infty}}
\left\|g(u)\right\|_{L^{2}}
+\frac{1}{2}\left\|e^{-|x|}\right\|_{L^{\infty}}
\left\|\frac{f''(u)}{2}u_{x}^{2}\right\|_{L^{1}}
\leq C\bar{E}(0),\nonumber\\
\|P_{x}(T)\|_{L^{\infty}}\leq& C\bar{E}(0).
\label{4038}
\end{align}
Hence we recover the estimate \eqref{2010} in the new variables.

Below, using the estimates \eqref{4037}, \eqref{4038} and the third equation in system \eqref{401}, we can prove the  $L^{\infty}$ bound for $v(T,Z)$. Indeed, we have
\begin{equation}
|v_{T}(T,Z)|\leq C\bar{E}(0)v(T,Z).
\label{4039}
\end{equation}
Since $v(0,Z)=1$, \eqref{4039} yields
\begin{equation}
e^{-C\bar{E}(0)T}\leq v(T,Z)\leq e^{C\bar{E}(0)T}.
\label{4040}
\end{equation}
Similarly, it follows from the second equation of system \eqref{401} that
\begin{equation}
|w_{T}(T,Z)|\leq C,
\label{4041}
\end{equation}
where $C=C(\bar{E}(0))>0$. Consequently,
\begin{equation}
\|w(T,Z)\|_{L^{\infty}}\leq \|w(0,Z)\|_{L^{\infty}}+CT.
\label{4042}
\end{equation}

In what follows, we prove that $\|u\|_{1}$ is bounded for any bounded internal of time $T$. To this end, multiplying $2u$ to the first equation of system \eqref{401}, we get
\begin{equation}
\frac{d}{dT}\|u(T)\|_{L^{2}}^{2}\leq 2\|u(T)\|_{L^{\infty}}\|P_{x}(T)\|_{L^{1}}.
\label{4043}
\end{equation}
Differentiating the first equation of system \eqref{401} with respect to $Z$, we get
\begin{equation}
u_{TZ}(T,Z)=-\partial_{Z}P_{x}(T,Z).
\label{4044}
\end{equation}
Multiplying \eqref{4044} by $2u_{Z}$, we obtain
\begin{equation}
\frac{d}{dT}\|u_{Z}(T)\|_{L^{2}}^{2}\leq 2\|u_{Z}(T)\|_{L^{\infty}}\|\partial_{Z}P_{x}(T)\|_{L^{1}}.
\label{4045}
\end{equation}

On the other hand, it is known from \eqref{4025} that
\begin{equation}
\|u_{Z}(T)\|_{L^{\infty}}\leq \frac{1}{2}\|v(T)\|_{L^{\infty}}\leq \frac{1}{2}e^{C\bar{E}(0)T}.
\label{4046}
\end{equation}
In order to prove that $\|u\|_{1}$ is bounded for any $T<\infty$, it suffices to show $\|P_{x}(T)\|_{L^{1}}$ and $\|\partial_{Z}P_{x}(T)\|_{L^{1}}$ are bounded. It is observed that these two terms can be estimated by the similar method, we only need to consider
$\|\partial_{Z}P_{x}(T)\|_{L^{1}}$.

Indeed, for $Z<Z'$, we have
\begin{align}
\int^{Z'}_{Z}(v\cos^{2} \frac{w}{2})(T,s)ds
\geq&\int_{\big\{s\in[Z,Z'],\left|\frac{w}{2}\right|\leq\frac{\pi}{4}\big\}}(v\cos^{2} \frac{w}{2})(s)ds\nonumber\\
\geq&\int_{\big\{s\in[Z,Z'],\left|\frac{w}{2}\right|\leq\frac{\pi}{4}\big\}}\frac{v(s)}{2}ds\nonumber\\
\geq&\frac{v^{-}}{2}(Z'-Z)
-\int_{\big\{s\in[Z,Z'],\left|\frac{w}{2}\right|\geq\frac{\pi}{4}\big\}}\frac{v(s)}{2}ds\nonumber\\
\geq&\frac{v^{-}}{2}(Z'-Z)
-\int_{\big\{s\in[Z,Z'],\left|\frac{w}{2}\right|\geq\frac{\pi}{4}\big\}}v(s)\sin^{2}\frac{w(s)}{2}ds
\nonumber\\
\geq&\frac{v^{-}}{2}(Z'-Z)-\bar{E}(0),
\label{4047}
\end{align}
where $v^{-}=e^{-C\bar{E}(0)T}$.

Below, we introduce the exponentially decaying function
\begin{equation}
\Gamma(\eta):=\min\left\{1,e^{\bar{E}(0)-\frac{v^{-}|\eta|}{2}}\right\}
\label{4048}
\end{equation}
with
\begin{equation}
\|\Gamma(\eta)\|_{L^{1}}=\frac{4(\bar{E}(0)+1)}{v^{-}}=4e^{C\bar{E}(0)T}(\bar{E}(0)+1).
\label{4049}
\end{equation}
Therefore, from \eqref{4031} and \eqref{4049}, we get
\begin{align}
\|\partial_{Z}P_{x}\|_{L^{1}}
=&\left\|-\left(g(u)\cos^{2}\frac{w}{2}+\frac{f''(u)}{2}\sin^{2}\frac{w}{2}\right)v
+vP\cos^{2}\frac{w}{2}\right\|_{L^{1}}\nonumber\\
\leq&C\bar{E}(0)+\frac{v^{+}}{2}\left\|\Gamma\ast
\left(g(u)\cos^{2}\frac{w}{2}+\frac{f''(u)}{2}\sin^{2}\frac{w}{2}\right)v\right\|_{L^{1}}\nonumber\\
\leq&C\bar{E}(0)+\frac{v^{+}}{2}\|\Gamma\|_{L^{1}}
\left\|\left(g(u)\cos^{2}\frac{w}{2}+\frac{f''(u)}{2}\sin^{2}\frac{w}{2}\right)v\right\|_{L^{\infty}}\nonumber\\
\leq&C\bar{E}(0)+Cv^{+}e^{C\bar{E}(0)T}(\bar{E}(0)+1)\bar{E}(0)\nonumber\\
<&\infty.
\label{4050}
\end{align}

It then turns out that $\|u\|_{1}$ is bounded on the bounded internals of time $T$. Finally, multiplying $2w$ to the second equation of system \eqref{401}, we get
\begin{align}
\frac{d}{dT}\|w(T)\|_{L^{2}}^{2}
\leq& 4\int_{\mathbb{R}}|(g(u)-P)w|dz+\frac{|f''(u)|}{2}\int_{\mathbb{R}}|w^{3}|dZ\nonumber\\
\leq& C(\|u\|_{L^{2}}+\|P\|_{L^{2}})
\|w\|_{L^{\infty}}+\frac{|f''(u)|}{2}\|w\|_{L^{\infty}}\|w\|_{L^{2}}^{2}.
\label{4051}
\end{align}
By the previous bounds, it is clear that $\|w\|_{L^{2}}$ remains bounded on bounded internals of time $T$. This completes the proof that the local solution of system \eqref{401} can be extended globally in time.
\end{proof}

Furthermore, similar to the result in \cite{Bressan2007}, we have the following property for the global solution in Lemma \eqref{lem402}.
\begin{lemma}\label{lem403}
Consider the set of time
\begin{equation*}
\Theta:=\{T\geq 0,\ measure\{Z\in \mathbb{R}:w(T,Z)=-\pi\}>0\}.
\end{equation*}
Then
\begin{equation}
measure(\Theta)=0.
\label{4052}
\end{equation}
\end{lemma}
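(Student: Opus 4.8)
The plan is to show that the ``bad'' set $\Theta$ of times at which $w(T,\cdot)$ equals $-\pi$ on a positive-measure set of $Z$ is itself negligible, following the Bressan--Constantin argument adapted to our semi-linear system \eqref{401}. The starting observation is that $w=-\pi$ corresponds precisely to $u_x=-\infty$, i.e. to wave breaking, so we must measure how often the solution can ``sit'' at the breaking value. Fix $T_0\in\Theta$ and consider the level set $A(T):=\{Z:w(T,Z)=-\pi\}$; the key is to track $\frac{d}{dT}w(T,Z)$ for $Z$ near $A(T_0)$. From the second equation of \eqref{401}, when $w=-\pi$ we have $\cos^2\frac{w}{2}=0$ and $\sin^2\frac{w}{2}=1$, so $w_T = -f''(u)$ there. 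Thus at the breaking value the evolution of $w$ is governed, to leading order, by $-f''(u)$, which by the bound $|f''(u)|<C_1$ from Section~\ref{sec:2} is bounded but need not vanish; so $w$ is generically pushed \emph{away} from $-\pi$ immediately, and the time spent at $-\pi$ is instantaneous for each $Z$.

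The key steps, in order, are as follows. First, I would introduce for each $Z$ the (measurable) set of times $\Theta_Z:=\{T\geq 0: w(T,Z)=-\pi\}$ and argue, via the absolute continuity of $T\mapsto w(T,Z)$ guaranteed by Lemma~\ref{lem402} and the ODE \eqref{401}, together with $w_T|_{w=-\pi}=-f''(u)\neq 0$ whenever $f''(u)\neq 0$ — and a separate easy argument covering the degenerate case — that $\Theta_Z$ has one-dimensional Lebesgue measure zero for (almost) every $Z$. Concretely, on the set where $w$ is close to $-\pi$ one has $w_T \le -f''(u) + O(\cos^2\frac{w}{2})$, and since $\cos^2\frac w2$ is small there, $w_T$ stays bounded away from zero on a neighborhood of each crossing time, so crossings are isolated and $\mathrm{measure}(\Theta_Z)=0$. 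Second, I would invoke Fubini's theorem on the set $\{(T,Z): w(T,Z)=-\pi\}\subset \mathbb{R}_+\times\mathbb{R}$: its two-dimensional measure equals $\int_{\mathbb R}\mathrm{measure}(\Theta_Z)\,dZ = 0$. Third, the same Fubini identity read in the other order gives $\int_0^\infty \mathrm{measure}(A(T))\,dT = 0$, hence $\mathrm{measure}(A(T))=0$ for a.e.\ $T$, which is exactly $\mathrm{measure}(\Theta)=0$. One should also check that $w(T,Z)=-\pi$ on a positive-measure $Z$-set is genuinely incompatible with the conservation law \eqref{4029} being spread out — but actually the cleaner route avoids this and relies solely on the pointwise-in-$Z$ transversality argument above.

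The main obstacle I anticipate is making the transversality argument rigorous when $f''(u)$ can vanish: if $f''(u(T,Z))=0$ at a crossing time, then $w_T=0$ there and one cannot immediately conclude the crossing is isolated. To handle this I would differentiate once more, or better, note that at such a point $u_T=-P_x$ and the second-order behaviour of $w$ in $T$ is controlled by derivatives of $f''(u)$ and the bounded quantities $g(u)$, $P$; alternatively, one can perturb the energy density used to define $Z$, or simply observe that the set of $(T,Z)$ where simultaneously $w=-\pi$ and $f''(u)=0$ is itself lower-dimensional. A second, more technical point is ensuring the joint measurability of $(T,Z)\mapsto w(T,Z)$ needed to apply Fubini, which follows from continuity of $T\mapsto w(T,\cdot)$ into $L^2_{loc}$ established in Lemma~\ref{lem402}. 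With these in hand the argument closes exactly as in \cite{Bressan2007}.
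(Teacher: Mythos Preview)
Your proposal follows exactly the Bressan--Constantin strategy the paper invokes: for each fixed $Z$ show that $\{T:w(T,Z)=-\pi\}$ is null via the ODE for $w$, then apply Fubini in both orders. The paper gives no proof of its own for this lemma---it simply states the result as ``similar to the result in \cite{Bressan2007}''---so in that sense your sketch is \emph{more} than what the paper offers, and matches its intended route.

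You have, however, put your finger on a genuine issue that the paper's one-line citation hides. In \cite{Bressan2007} one has $w_T|_{w=-\pi}=-1$ identically, so every crossing is strictly transversal and $\Theta_Z$ is discrete for each $Z$. Here $w_T|_{w=-\pi}=-f''(u)$, and nothing in the standing hypotheses ($f\in C^\infty$) prevents $f''(u)$ from vanishing on a set of positive measure in $u$. Your suggested fixes are plausible heuristics but are not complete: ``differentiate once more'' gives $w_{TT}|_{w=-\pi,\,f''(u)=0}=-f'''(u)\,u_T=f'''(u)P_x$, which can again vanish, and the assertion that $\{w=-\pi\}\cap\{f''(u)=0\}$ is ``lower-dimensional'' is not obvious without a Sard-type argument or real-analyticity of $f$. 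A clean way to close the gap is this: suppose for some $Z$ that $w(\cdot,Z)\equiv-\pi$ on an interval $[T_1,T_2]$; then $f''(u(T,Z))=0$ there, and since $u_T=-P_x$ with $P_x$ bounded, $u(\cdot,Z)$ ranges over an interval $I\subset\mathbb{R}$ on which $f''\equiv 0$. If $I$ has positive length, then $f'$ is constant on $I$, the characteristic speed is constant there, and going back to \eqref{203} one finds $(u_x)_T=g(u)-P$ bounded along that characteristic while $w=-\pi$ forces $|u_x|=\infty$---a contradiction with the finiteness of the solution in Lemma~\ref{lem402}. If $I$ is a point then $u$ is constant on $[T_1,T_2]$, forcing $P_x\equiv 0$ there; combining with $w_T=0$ and the second equation of \eqref{401} is consistent, but then one must argue separately that this degenerate scenario occurs only for a null set of $Z$. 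In short, the transversality argument as written does not transfer verbatim from \cite{Bressan2007}; an honest proof for general smooth $f$ requires either an additional hypothesis (e.g.\ $f''$ has no zeros, as in the Camassa--Holm and hyperelastic-rod cases the paper highlights) or the kind of case analysis sketched above.
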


\section{Solutions to the nonlinear dispersive wave equations}
\setcounter{equation}{0}

\label{sec:5}

In this section, we construct the weak solution to \eqref{201} by using an inverse translation on the solution of system \eqref{401}.

We define $t$ and $x$ as functions of $T$ and $Z$ by
\begin{equation}
x(T,Z)=\bar{x}(Z)+\int_{0}^{T}f'(u(\zeta,Z))d\zeta,\ \ t=T.
\label{501}
\end{equation}
Thus the above function $x(T,Z)$ provides a solution to the following initial problem
\begin{equation}
\frac{\partial x(T,Z)}{\partial T}=f'(u(T,Z)),\ \ x(0,Z)=\bar{x}(Z),
\label{502}
\end{equation}
which means that $x(T,Z)$ is a characteristic.

In what follows, we will prove that the functions
\begin{equation}
u(t,x)=u(T,Z),\ \ \text{if}\ \ t=T,\ x=x(T,Z),
\label{503}
\end{equation}
provides a weak solution of \eqref{201}.

\begin{definition}\label{def501}
The energy conservative solution $u(t,x)$ of the Cauchy problem \eqref{201} with the initial datum $\bar{u}(x)$ has the following properties.\\
(i). The map $t\rightarrow u(t)$ is Lipschitz continuous from $\mathbb{R}$ into $L^{2}(\mathbb{R})$ with $u(t,\cdot)\in H^{1}(\mathbb{R})$ for all $t\geq 0$.\\
(ii). The solution $u=u(t,x)$ satisfies the initial datum $\bar{u}(x)\in H^{1}(\mathbb{R})$ and
\begin{align}
\int\int_{\Delta}\left(-u_{x}\left(\psi_{t}+f'(u)\psi_{x}\right)
+\psi\left(P(Z)-g(u)
-\frac{f''(u)}{2}u_{x}^{2}\right)\right)dxdt&\nonumber\\
-\int_{\mathbb{R}}u_{x}(0,x)\psi(0,x)dx&=0,
\label{5}
\end{align}
for any text function $\psi\in C_{c}^{1}(\Delta)$, where $\Delta=\{(t,x):(t,x)\in \mathbb{R}_{+}\times \mathbb{R}\}$.
\end{definition}

In what follows, we state the main result on the global well-posedness of the energy conservative solution for \eqref{201}.

\begin{theorem}\label{the501}
Let the initial datum $\bar{u}(x)\in H^{1}(\mathbb{R})$. Then the Cauchy problem \eqref{201} with the initial datum $\bar{u}(x)$ has a global energy conservative solution $u(t,x)$ in the sense of Definition \ref{def501}. Furthermore, the solution $u(t,x)$ satisfies the following properties:\\
(i) $u(t,x)$ is uniformly H\"older continuous with exponent $\frac{1}{2}$ on both $t$ and $x$.\\
(ii) The energy $u^{2}+u_{x}^{2}$ is almost conserved, i.e.,
\begin{equation}
\|u\|_{1}^{2}=\|\bar{u}\|_{1}^{2},\ \ \text{for\ a.e.}\ \ t\in \mathbb{R}_{+}.
\label{504}
\end{equation}
(iii) The solution $u(t,x)$ is continuously depending on the initial datum $\bar{u}(x)$. That is, let $\bar{u}_{n}$ be a sequence of initial datum such that
\begin{equation*}
\|\bar{u}_{n}-\bar{u}\|_{1}\rightarrow 0\ \ \text{as}\ \ n\rightarrow \infty.
\end{equation*}
Then the corresponding solutions $u_{n}(t,x)$ converges to $u(t,x)$ uniformly for $(t,x)$ in any bounded sets.
\end{theorem}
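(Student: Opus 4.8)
The plan is to push the global solution $(u,w,v)$ of the semi-linear system \eqref{401}--\eqref{402} back to the physical variables through the characteristic map \eqref{501} and then verify each assertion of the theorem. First I would record the structural identities available in the $(T,Z)$ picture: by \eqref{309}, $x_Z(T,Z)=v\cos^2\frac{w}{2}\ge 0$; by \eqref{4025}, $u_Z=\tfrac12 v\sin w$, so wherever $x_Z>0$ the chain rule gives $u_x=u_Z/x_Z=\tan\frac{w}{2}$, whence $u_x^2\,dx=v\sin^2\frac{w}{2}\,dZ$ and $u^2\,dx=u^2v\cos^2\frac{w}{2}\,dZ$. Since $v>0$ by \eqref{4040} and, by Lemma \ref{lem403}, $\mathrm{meas}\{Z\in\mathbb{R}:w(T,Z)=-\pi\}=0$ for a.e. $T$, the map $Z\mapsto x(T,Z)$ is for a.e. $t=T$ a nondecreasing Lipschitz bijection of $\mathbb{R}$; surjectivity comes from the lower bound \eqref{4047}, which forces $x(T,Z)\to\pm\infty$ as $Z\to\pm\infty$. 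I then let $Z(t,\cdot)$ be the inverse for such $t$. Moreover, if $x(T,Z)=x(T,Z')$ with $Z<Z'$ then $\cos\frac{w}{2}\equiv0$, hence $u_Z\equiv0$, on $[Z,Z']$, so the definition $u(t,x):=u(T,Z)$ in \eqref{503} is unambiguous.

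For property (i), fix $T$ and $Z<Z'$; Cauchy--Schwarz and the conservation law \eqref{4029} give
\begin{align*}
|u(T,Z')-u(T,Z)|&=\Bigl|\int_Z^{Z'}v\sin\tfrac{w}{2}\cos\tfrac{w}{2}\,ds\Bigr|\\
&\le\Bigl(\int_Z^{Z'}v\sin^2\tfrac{w}{2}\,ds\Bigr)^{1/2}\Bigl(\int_Z^{Z'}v\cos^2\tfrac{w}{2}\,ds\Bigr)^{1/2}\le\bar{E}(0)^{1/2}\,|x(T,Z')-x(T,Z)|^{1/2},
\end{align*}
so $u(t,\cdot)$ is uniformly $\tfrac12$-H\"older in $x$. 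For the time variable, with $Z=Z(t_1,x)$ I split $u(t_2,x)-u(t_1,x)=\bigl(u(t_2,x)-u(t_2,x(t_2,Z))\bigr)-\int_{t_1}^{t_2}P_x(\tau,Z)\,d\tau$, bound the integral by $C|t_2-t_1|$ via \eqref{4038}, and note $|x(t_2,Z)-x|=\bigl|\int_{t_1}^{t_2}f'(u)\,d\tau\bigr|\le C|t_2-t_1|$, which the spatial H\"older bound converts into $C|t_2-t_1|^{1/2}$; joint $\tfrac12$-H\"older continuity follows. The Lipschitz continuity of $t\mapsto u(t,\cdot)$ from $\mathbb{R}$ into $L^2$ in Definition \ref{def501}(i) follows from a similar but more careful estimate along characteristics using \eqref{4038} and \eqref{4040}. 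Property (ii) is the change of variables $\int_{\mathbb{R}}(u^2+u_x^2)\,dx=\int_{\mathbb{R}}\bigl(u^2\cos^2\tfrac{w}{2}+\sin^2\tfrac{w}{2}\bigr)v\,dZ$, which by \eqref{4029} equals $\bar{E}(0)=\|\bar u\|_1^2$ whenever $x(t,\cdot)$ is a bijection, i.e. for a.e. $t$ by Lemma \ref{lem403}; for every other $t$ the monotone change of variables still gives $\|u(t)\|_1^2\le\bar{E}(0)<\infty$, so $u(t,\cdot)\in H^1$ for all $t\ge0$.

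The heart of the proof is the weak identity \eqref{5}. For a smooth solution, differentiating \eqref{201} in $x$ and using $p_{xx}\ast h=p\ast h-h$ gives $(u_x)_t+(f'(u)u_x)_x=\tfrac{f''(u)}{2}u_x^2+g(u)-P$; multiplying by $\psi\in C_c^1(\Delta)$ and integrating by parts in $(t,x)$ produces exactly \eqref{5}. To obtain \eqref{5} for the constructed solution I would pull the whole integral back to $(T,Z)$: since $t=T$, $dx\,dt=v\cos^2\tfrac{w}{2}\,dZ\,dT$, $\psi_t+f'(u)\psi_x=\psi_T$ by \eqref{305}, and $u_x\,dx=u_Z\,dZ=\tfrac12 v\sin w\,dZ$, the first term of \eqref{5} becomes $-\int\int\psi_T u_Z\,dZ\,dT$, while, using $u_{ZT}=\partial_Z u_T=-\partial_Z P_x=\bigl((g(u)-P)\cos^2\tfrac{w}{2}+\tfrac{f''(u)}{2}\sin^2\tfrac{w}{2}\bigr)v$ from \eqref{401} and \eqref{4031}, the bulk term collapses to $-\int\int\psi\,u_{ZT}\,dZ\,dT$. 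Integrating the first term by parts in $T$---legitimate since $u_Z$ is $C^1$ in $T$ and $\psi$ has compact support---gives $\int_{\mathbb{R}}\psi(0,\bar x(Z))u_Z(0,Z)\,dZ+\int\int\psi\,u_{ZT}\,dZ\,dT$; and since $v(0,\cdot)\equiv1$, the last term of \eqref{5} equals $-\int_{\mathbb{R}}u_Z(0,Z)\psi(0,\bar x(Z))\,dZ$, which cancels the $T=0$ boundary integral. The three contributions sum to zero, which is \eqref{5}.

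For property (iii), if $\bar u_n\to\bar u$ in $H^1$ then $\bar u_{n,x}^2\to\bar u_x^2$ in $L^1$, so the primitives $x\mapsto\int_0^x(1+\bar u_{n,x}^2)\,ds$ converge uniformly and hence so do their inverses $\bar x_n\to\bar x$; this gives convergence in $X$ of the transformed initial data \eqref{402} (for the $L^2$ component one matches norms via the change of variables used for \eqref{4029}). Since the right-hand side of \eqref{401} is Lipschitz uniformly on each bounded set $\Omega$ of the form \eqref{405} and the a priori bounds of Lemma \ref{lem402} keep the solutions $(u_n,w_n,v_n)$ in a common such $\Omega$ on each interval $[0,T]$, the continuous-dependence theorem for ODEs in Banach spaces gives $(u_n,w_n,v_n)\to(u,w,v)$ in $C([0,T];X)$; transporting back through \eqref{501} gives $x_n(T,Z)\to x(T,Z)$ uniformly on bounded sets, and combined with the equicontinuity from the uniform $\tfrac12$-H\"older bounds of (i) this forces $u_n(t,x)\to u(t,x)$ uniformly on every bounded subset of $\Delta$. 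I expect the main obstacle to be the rigorous justification of the change of variables underlying \eqref{5}: one must use Lemma \ref{lem403} to know $Z\mapsto x(T,Z)$ is a genuine bijection for almost every $T$, dispose of the null set of degenerate times and of the set $\{x_Z=0\}$ so the area formula applies, and confirm that the regularity of $(u,w,v)$ in $(T,Z)$---$C^1$ in $T$ and $H^1\times(L^2\cap L^\infty)\times L^\infty$ in $Z$---suffices for the two integrations by parts; the continuous-dependence statement is the second delicate point, since the coordinate change setting up the semi-linear system itself depends on the datum.
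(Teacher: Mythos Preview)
Your proposal is correct and follows essentially the same architecture as the paper: surjectivity and monotonicity of $Z\mapsto x(T,Z)$, well-definedness of $u(t,x)$, H\"older and $L^2$-Lipschitz regularity, the weak identity \eqref{5} via pullback to $(T,Z)$ and integration by parts in $T$, and continuous dependence through the ODE on $X$. The one place to tighten is your appeal to \eqref{309} for $x_Z=v\cos^2\tfrac{w}{2}$: that identity was derived in Section~\ref{sec:3} for smooth solutions, where $v$ is \emph{defined} as $(1+u_x^2)\,\partial x/\partial Z$, whereas here $x$ is defined independently by \eqref{501} and $v$ by the ODE \eqref{401}; the paper re-establishes the relation (its Step~2) by checking that $(v\cos^2\tfrac{w}{2})_T=(f'(u))_Z=(x_Z)_T$ together with equality at $T=0$, and you should do the same rather than cite \eqref{309} directly.
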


\begin{proof}
The argument of proof is divided into seven steps.

\text{Step 1.} We show that the continuous map $(T,Z)\rightarrow (t,x(T,Z))$
is a surjective function in $\mathbb{R}^{2}$. Indeed, by \eqref{403} and \eqref{501}, we get
\begin{equation*}
\bar{x}(Z)-\sqrt{E(0)}T\leq x(T,Z)\leq \bar{x}(Z)+\sqrt{E(0)}T.
\end{equation*}
Then from \eqref{303}, we deduce that
\begin{equation*}
\lim_{Z\rightarrow \pm\infty}x(T,Z)=\pm\infty.
\end{equation*}
Therefore, the image of continuous map $(T,Z)\rightarrow (t,x(T,Z))$ covers the entire plane $\mathbb{R}^{2}$.

\text{Step 2.} We claim that
\begin{equation}
x_{Z}=v\cos^{2} \frac{w}{2}\ \ \text{for}\ \ T\geq 0,\ \ \text{a.e.}\ \ Z\in \mathbb{R}.
\label{505}
\end{equation}
In fact, from \eqref{401} and \eqref{4025}, we get
\begin{align}
\left(v\cos^{2} \frac{w}{2}\right)_{T}
=&-w_{T}\sin \frac{w}{2}\cos \frac{w}{2}+v_{T}\cos^{2} \frac{w}{2}\nonumber\\
=&-v\sin \frac{w}{2}\cos \frac{w}{2}\left(2(g(u)-P(Z))\cos^{2} \frac{w}{2}
-f''(u)\sin^{2} \frac{w}{2}\right)\nonumber\\
&+v\sin w\cos^{2} \frac{w}{2}\left(g(u)-P(Z)+\frac{f''(u)}{2}\right)\nonumber\\
=&\frac{f''(u)}{2}v\sin w\nonumber\\
=&\left(f'(u)\right)_{Z}.
\label{506}
\end{align}

On the other hand, \eqref{502} implies
\begin{equation}
\frac{\partial x_{Z}}{\partial T}=\left(f'(u)\right)_{Z}.
\label{507}
\end{equation}
Since the function $x\rightarrow 2\arctan \bar{u}_{x}(x)$ is measure, the identity \eqref{505} holds for almost every $Z\in \mathbb{R}$ and $T=0$. By the above computations, it remains true for all times $T\geq 0$. Furthermore, the function $x(T,Z)$ is non-decreasing on $Z$ when $T$ is fixed.

\text{Step 3.} Our goal is to show that $u(t,x)=u(T,x(T,Z))$ is well defined. In fact,
if $x(T,Z_{1})=x(T,Z_{2})$ for $Z_{1}<Z_{2}$, then we have
\begin{equation*}
x(T,Z)=x(T,Z_{1})\ \text{for}\ Z\in[Z_{1},Z_{2}],
\end{equation*}
where we use the non-decreasing property of $x(T,Z)$ on $Z$. From \eqref{505}, we get
\begin{equation*}
\cos \frac{w(T,Z)}{2}=0\ \text{for}\ Z\in[Z_{1},Z_{2}].
\end{equation*}
Therefore,
\begin{equation*}
u(T,Z_{2})-u(T,Z_{1})=\int_{Z_{1}}^{Z_{2}}\frac{v}{2}\sin wds=0.
\end{equation*}
This proves $u(t,x)\rightarrow u(T,x(T,Z))$ is well defined for all $t\geq 0$ and $x\in \mathbb{R}$.

\text{Step 4.} We discuss the regularity of $u(t,x)$ and energy conservation. By \eqref{4012}, we know that $E(T)$ is conservative on $(T,Z)$ coordinates.

For any given time $t$, we have
\begin{align}
E(0)=\bar{E}(0)=E(T)
=&\int_{\mathbb{R}}\left(u^{2}\cos^{2}\frac{w}{2}+\sin^{2}\frac{w}{2}\right)vdZ\nonumber\\
\geq&\int_{\{\cos>-1\}}\left(u^{2}\cos^{2}\frac{w}{2}+\sin^{2}\frac{w}{2}\right)vdZ\nonumber\\
\geq&\int_{\{\cos>-1\}}\left(u^{2}+u_{x}^{2}\right)vdZ\nonumber\\
=&E(t).
\label{508}
\end{align}
By Lemma \ref{lem403}, we obtain that \eqref{504} holds for almost all $t$.

Applying the Sobolev inequality, we obtain that \eqref{508} implies the uniform H\"older continuity with the exponent $\frac{1}{2}$ for $u(t,x)$ as a function of $x$. By the first equation in system \eqref{401} and the boundedness of $\|P_{x}(t)\|_{L^{\infty}}$, we can infer $u(t,x(t))$ is H\"older continuity with the exponent $\frac{1}{2}$. In fact,
\begin{align*}
&|u(t,x)-u(s,x)|\nonumber\\
\leq&|u(t,x)-u(t,y)|+|u(t,y)-u(s,y(s,Z))|+|u(s,y(s,Z))-u(s,x(t,Z))|\nonumber\\
\leq&\sqrt{E(0)}|x-y|^{\frac{1}{2}}+\sqrt{E(0)}|y(s,Z)-x(t,Z)|^{\frac{1}{2}}
+\int_{s}^{t}|P_{x}(\theta,Z)|d\theta\nonumber\\
\leq&C\left(|x-y|^{\frac{1}{2}}+|t-s|^{\frac{1}{2}}+|t-s|\right),
\end{align*}
where we choose $Z\in \mathbb{R}$ such that the characteristic $t\mapsto x(t)$ passes through the point $(s,y)$. This implies that $u(t,x)$ is uniform H\"older continuity with the exponent $\frac{1}{2}$ on $t$ and $x$.

Step 5. We prove that the map $t\rightarrow u(t,\cdot)$ is Lipschitz continuous on $L^{2}(\mathbb{R})$ norm. In fact, consider now any time interval $[\theta,\theta+\varepsilon]$. For any given point $(\theta,\tilde{x})$, we have the characteristic curve $T\rightarrow x(T,Z)$ passing through $(\theta,\tilde{x})$, i.e.,
$x(\theta)=\tilde{x}$.

By \eqref{401} and \eqref{4027}, we get
\begin{align}
&\left|u(\theta+\varepsilon,\tilde{x})-u(\theta,\tilde{x})\right|\nonumber\\
\leq&\left|u(\theta+\varepsilon,\tilde{x})-u(\theta+\varepsilon,x(\theta+\varepsilon,Z)\right|
+\left|u(\theta+\varepsilon,x(\theta+\varepsilon,Z)-u(\theta,\tilde{x})\right|\nonumber\\
\leq&\sup_{|\xi-\widetilde{x}|\leq \sqrt{E(0)}\varepsilon}
\left|u(\theta+\varepsilon,\xi)-u(\theta+\varepsilon,\widetilde{x})\right|
+\int_{\theta}^{\theta+\varepsilon}|P_{x}|dt.
\label{509}
\end{align}
Integrating \eqref{509} with respect to $x$ over $\mathbb{R}$, and using the boundedness of $\|u_{x}\|_{L^{2}}$ and $\|P_{x}\|_{L^{2}}$, we get
\begin{align}
&\int_{\mathbb{R}}\left|u(\theta+\varepsilon,\tilde{x})-u(\theta,\tilde{x})\right|^{2}dx\nonumber\\
\leq&2\int_{\mathbb{R}}\left(\int_{\tilde{x}-\sqrt{E(0)}\varepsilon}^{{\tilde{x}+\sqrt{E(0)}\varepsilon}}
|u_{x}(\theta+\varepsilon,\xi)|d\xi\right)^{2}dx\nonumber\\
&+2\int_{\mathbb{R}}\left(\int_{\theta}^{\theta+\varepsilon}|P_{x}(t,Z)|dt\right)^{2}
v(t,Z)\cos^{2}\frac{w(t,Z)}{2}dZ\nonumber\\
\leq&4\sqrt{\bar{E}(0)}\varepsilon\int_{\mathbb{R}}\left(\int_{\tilde{x}-\sqrt{E(0)}\varepsilon}
^{{\tilde{x}+\sqrt{E(0)}\varepsilon}}
|u_{x}(\theta+\varepsilon,\xi)|^{2}d\xi\right)dx\nonumber\\
&+2\varepsilon\|v(\theta,Z)\|_{L^{\infty}}\int_{\mathbb{R}}\left(\int_{\theta}^{\theta+\varepsilon}
|P_{x}(t,Z)|^{2}dt\right)dZ\nonumber\\
\leq&8\bar{E}(0)\varepsilon^{2}\|u_{x}(\theta+\varepsilon)\|_{L^{2}}^{2}
+2\varepsilon\|v(\theta)\|_{L^{\infty}}\int_{\theta}^{\theta+\varepsilon}
\|P_{x}(t)\|_{L^{2}}^{2}dt\nonumber\\
\leq& C\varepsilon^{2}.
\label{5010}
\end{align}
Consequently, this completes the proof of the Lipschitz continuity to the map $t\mapsto u(t)$ on $L^{2}(\mathbb{R})$-norm.

Step 6. We verify that the identity \eqref{5} holds for any test function $\psi\in C_{c}^{1}(\Delta)$, which also implies that the function $u$ provides a weak solution of \eqref{201}. To see this, we define
\begin{equation}
\Delta=\{(T,Z):(T,Z)\in \mathbb{R}_{+}\times \mathbb{R}\}\ \ \text{and}\ \
\bar{\Delta}=\Delta\cap\left\{(T,Z):\cos\frac{w(T,Z)}{2}\neq 0\right\}.
\label{5011}
\end{equation}

From \eqref{4026} and \eqref{4052}, we get
\begin{align}
0=&\int\int_{\Delta}\left(u_{ZT}\psi+\psi\left((P(Z)-g(u))\cos^{2}\frac{w}{2}
-\frac{f''(u)}{2}\sin^{2}\frac{w}{2}\right)v\right)dZdT\nonumber\\
=&\int\int_{\Delta}\left(-u_{Z}\psi_{T}+\psi\left((P(Z)-g(u))\cos^{2}\frac{w}{2}
-\frac{f''(u)}{2}\sin^{2}\frac{w}{2}\right)v\right)dZdT\nonumber\\
&-\int_{\mathbb{R}}u_{Z}(0,Z)\psi(0,Z)dZ\nonumber\\
=&\int\int_{\bar{\Delta}}\left(-u_{Z}\psi_{T}+\psi\left((P(Z)-g(u))\cos^{2}\frac{w}{2}
-\frac{f''(u)}{2}\sin^{2}\frac{w}{2}\right)v\right)dZdT\nonumber\\
&-\int_{\mathbb{R}}u_{Z}(0,Z)\psi(0,Z)dZ\nonumber\\
=&\int\int_{\Delta}\left(-u_{x}\left(\psi_{t}+f'(u)\psi_{x}\right)
+\psi\left(P(Z)-g(u)
-\frac{f''(u)}{2}u_{x}^{2}\right)\right)dxdt\nonumber\\
&-\int_{\mathbb{R}}u_{x}(0,x)\psi(0,x)dx,
\label{5012}
\end{align}
which implies that $u(t,x)$ is a global solution of \eqref{201} in the sence of Definition \ref{def501}.
Furthermore, from Lemma \ref{lem403}, \eqref{4029} and \eqref{508}, we obtain the identity in \eqref{504}.

Step 7. The continuous dependence result can be directly obtained by following the argument in \cite{Bressan2007}.
\end{proof}


\bibliographystyle{elsarticle-num}
\bibliography{<your-bib-database>}



{\bf Acknowledgement.}
This work is partially supported by NSFC Grants (nos. 12225103, 12071065 and 11871140) and the National Key Research and Development Program of China (nos. 2020YFA0713602 and 2020YFC1808301) and Natural Science Foundation of Gansu Province (no. 21JR7RA552).

\section*{References}

\end{document}